\documentclass[12pt,leqno]{article}
\usepackage{graphics,graphicx,amssymb}
\usepackage{amsmath,amsthm}
\textwidth 15cm \textheight 22cm

% THEOREM Environments ---------------------------------------------------
\numberwithin{equation}{section}
\newtheorem{thm}{Theorem}[section]
\newtheorem{lem}[thm]{Lemma}
\newtheorem{prop}[thm]{Proposition}
\newtheorem{cor}[thm]{Corollary}

\theoremstyle{definition}
%[section]
%[section]
%[section]

\theoremstyle{remark}

% MATH -------------------------------------------------------------------

\newcommand{\X}{\mathfrak{X}}
\newcommand{\s}{\mathfrak{S}}
\newcommand{\W}{\mathcal{W}}
\newfont{\w}{msbm9 scaled\magstep1}

\newcommand{\norm}[1]{\left\Vert#1\right\Vert ^2}

\newcommand{\secref}[1]{\S\ref{#1}}

%\frenchspacing

%%% ----------------------------------------------------------------------
\begin{document}

\title%[On the geometry of quasi-K\"{a}hler manifolds with Norden metric]
{On the geometry of quasi-K\"{a}hler manifolds with Norden metric}

\author{Dimitar Mekerov, Mancho Manev}

%%% ----------------------------------------------------------------------
\maketitle
%%% ----------------------------------------------------------------------
\begin{abstract}
The basic class of the non-integrable almost complex manifolds
with Norden metric is considered. Its curvature properties are
studied. The isotropic K\"ahler type of investigated manifolds is
introduced and characterized geometrically.

\thanks{2000
\emph{Mathematics Subject Classification}: Primary 53C15, 53C50;
Secondary 32Q60, 53C55}

\thanks{\emph{Key words and phrase}:
almost complex manifold, Norden metric, quasi-K\"ahler manifold,
indefinite metric, non-integrable almost complex structure, Lie
group}
\end{abstract}

%%%%%%%%%%%%%%%%%%%%%%%%%%%%%%%%%%%%%%%%%%%%%%%%%%%%%%%%%%%%%%%%%%%%%%%%%%%0
%\section*{Introduction}

The generalized $B$-manifolds are introduced in \cite{GrMeDj}.
They are also known as almost complex manifolds with Norden metric
in \cite{GaBo} and as almost complex manifolds with B-metric in
\cite{GaGrMi}. In the present paper these manifolds are called
almost complex manifolds with Norden metric.

The aim of the present work is to further study of the geometry of
one of the basic classes of almost complex manifolds with Norden
metric. This is the class of the quasi-K\"ahler manifolds with
Norden metric, which is the only basic class with non-integrable
almost complex structure.

In \secref{sec_1} we recall the notions of the almost complex
manifolds with Norden metric, we give some of their curvature
properties and introduce isotropic K\"ahler type of the considered
manifolds.

In \secref{sec_2} we specialize some curvature properties for the
quasi-K\"ahler manifolds with Norden metric and the corresponding
invariants.

%%%%%%%%%%%%%%%%%%%%%%%%%%%%%%%%%%%%%%%%%%%%%%%%%%%%%%%%%%%%%%%%%%%%%%%%%%%%%%%%%1

\section{Almost Complex Manifolds with Norden Metric}\label{sec_1}

Let $(M,J,g)$ be a $2n$-dimensional almost complex manifold with
Norden metric, i.e. $J$ is an almost complex structure and $g$ is
a metric on $M$ such that
\begin{equation}\label{Jg}
J^2X=-X, \qquad g(JX,JY)=-g(X,Y)
\end{equation}
for all differentiable vector fields $X$, $Y$ on $M$, i.e. $X, Y
\in \X(M)$.

The associated metric $\tilde{g}$ of $g$ on $M$ given by
$\tilde{g}(X,Y)=g(X,JY)$ for all $X, Y \in \X(M)$ is a Norden
metric, too. Both metrics are necessarily of signature $(n,n)$.
The manifold $(M,J,\tilde{g})$ is an almost complex manifold with
Norden metric, too.

Further, $X$, $Y$, $Z$, $U$ ($x$, $y$, $z$, $u$, respectively)
will stand for arbitrary differentiable vector fields on $M$
(vectors in $T_pM$, $p\in M$, respectively).

The Levi-Civita connection of $g$ is denoted by $\nabla$. The
tensor filed $F$ of type $(0,3)$ on $M$ is defined by
\begin{equation}\label{F}
F(X,Y,Z)=g\bigl( \left( \nabla_X J \right)Y,Z\bigr).
\end{equation}

It has the following symmetries
\begin{equation}\label{F-prop}
F(X,Y,Z)=F(X,Z,Y)=F(X,JY,JZ).
\end{equation}

Let $\{e_i\}$ ($i=1,2,\dots,2n$) be an arbitrary basis of $T_pM$
at a point $p$ of $M$. The components of the inverse matrix of $g$
are denoted by $g^{ij}$ with respect to the basis $\{e_i\}$.

The Lie form $\theta$ associated with $F$ is defined by
\begin{equation}\label{theta}
\theta(z)=g^{ij}F(e_i,e_j,z).
\end{equation}

A classification of the considered manifolds with respect to $F$
is given in \cite{GaBo}. Eight classes of almost complex manifolds
with Norden metric are characterized there by conditions for $F$.
The three basic classes are given as follows
\begin{equation}\label{class}
\begin{array}{l}
\W_1: F(x,y,z)=\frac{1}{4n} \left\{
g(x,y)\theta(z)+g(x,z)\theta(y)\right. \\[4pt]
\phantom{\mathcal{W}_1: F(x,y,z)=\frac{1}{4n} }\left.
    +g(x,J y)\theta(J z)
    +g(x,J z)\theta(J y)\right\};\\[4pt]
\W_2: \mathop{\s} \limits_{x,y,z}
F(x,y,J z)=0,\quad \theta=0;\\[8pt]
\W_3: \mathop{\s} \limits_{x,y,z} F(x,y,z)=0,
\end{array}
\end{equation}
where $\s $ is the cyclic sum by three arguments.

The special class $\W_0$ of the K\"ahler manifolds with Norden
metric belonging to any other class is determined by the condition
$F=0$.

Let $R$ be the curvature tensor field of $\nabla$ defined
by
\begin{equation}\label{R}
    R(X,Y)Z=\nabla_X \nabla_Y Z - \nabla_Y \nabla_X Z -
    \nabla_{[X,Y]}Z.
\end{equation}
The corresponding tensor field of type $(0,4)$ is
determined as follows
\begin{equation}\label{R04}
    R(X,Y,Z,U)=g(R(X,Y)Z,U).
\end{equation}

\begin{thm} \label{t1}
Let $(M,J,g)$ be an almost complex manifold with Norden metric.
Then the following identities are valid
\begin{enumerate}
\renewcommand{\labelenumi}{(\roman{enumi})}
    \item
    $R(X,Y,JZ,U)-R(X,Y,Z,JU)=
    \left(\nabla_X F\right)(Y,Z,U)-\left(\nabla_Y
    F\right)(X,Z,U); $
    \item
    $\left(\nabla_X F\right)(Y,JZ,U)+\left(\nabla_X
    F\right)(Y,Z,JU)\\
    =
    -g\bigl(\left(\nabla_X J\right)Z,\left(\nabla_Y J\right)U\bigr)
    -g\bigl(\left(\nabla_X J\right)U,\left(\nabla_Y
    J\right)Z\bigr);$
    \item
    $\left(\nabla_X F\right)(Y,Z,U)=\left(\nabla_X
    F\right)(Y,U,Z);$
    \item
    $g^{ij}\left(\nabla_{e_i} F\right)(e_j,Jz,u)+
    g^{ij}\left(\nabla_{e_i} F\right)(e_j,z,Ju)\\
    =-2g^{ij}g\bigl(\left(\nabla_{e_i} J\right)z,\left(\nabla_{e_j}
    J\right)u\bigr)$.
\end{enumerate}
\end{thm}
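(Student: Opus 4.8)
The plan is to reduce all four identities to the second covariant derivative of the structure tensor $J$ together with the algebraic symmetries \eqref{F-prop}, using throughout that the Levi-Civita connection is metric, $\nabla g=0$, and that the Norden condition \eqref{Jg} makes $J$ self-adjoint, $g(JX,Y)=g(X,JY)$. Two preliminary remarks carry most of the bookkeeping: differentiating $J^2=-\mathrm{id}$ gives $(\nabla_X J)\circ J=-J\circ(\nabla_X J)$, and writing $F(X,Y,Z)=g\bigl((\nabla_X J)Y,Z\bigr)$ together with the symmetry $F(X,Y,Z)=F(X,Z,Y)$ shows that each endomorphism $(\nabla_X J)$ is itself $g$-self-adjoint.

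For (i) I would introduce the $(1,2)$-tensor $P(Y,Z):=(\nabla_Y J)Z$, so that $F(Y,Z,U)=g(P(Y,Z),U)$ and, since $\nabla g=0$, $(\nabla_X F)(Y,Z,U)=g\bigl((\nabla_X P)(Y,Z),U\bigr)$. A direct check identifies $(\nabla_X P)(Y,Z)$ with the second covariant derivative $(\nabla^2_{X,Y}J)Z$, so the left-hand side of (i) equals $g\bigl((\nabla^2_{X,Y}J)Z-(\nabla^2_{Y,X}J)Z,\,U\bigr)$. The Ricci commutation identity for the $(1,1)$-tensor $J$ gives $(\nabla^2_{X,Y}J)Z-(\nabla^2_{Y,X}J)Z=R(X,Y)JZ-J\,R(X,Y)Z$; pairing with $U$ and moving $J$ across $g$ by self-adjointness turns $g(J\,R(X,Y)Z,U)$ into $R(X,Y,Z,JU)$, which is precisely (i).

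For (ii) I would start from the pointwise consequence of \eqref{F-prop} obtained by replacing $U$ by $JU$ in $F(Y,Z,U)=F(Y,JZ,JU)$, namely $F(Y,JZ,U)+F(Y,Z,JU)=0$. Viewing the left-hand side as a $(0,3)$-tensor built from $F$ and $J$ and covariantly differentiating it (the tensor being identically zero), the Leibniz rule yields $(\nabla_X F)(Y,JZ,U)+(\nabla_X F)(Y,Z,JU)=-F\bigl(Y,(\nabla_X J)Z,U\bigr)-F\bigl(Y,Z,(\nabla_X J)U\bigr)$. I would then rewrite each term on the right via $F(Y,A,B)=g((\nabla_Y J)A,B)$, the self-adjointness of $(\nabla_Y J)$, and the symmetry of $g$ to pass $(\nabla_X J)Z$ and $(\nabla_X J)U$ across $g$, producing the stated right-hand side of (ii). Statement (iii) is the easiest: $F(Y,Z,U)=F(Y,U,Z)$ is an identity of tensors, and since transposing the last two slots commutes with $\nabla_X$, it passes to $\nabla_X F$ verbatim. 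Statement (iv) is then the $g^{ij}$-trace of (ii) over the first two arguments (setting $X=e_i$, $Y=e_j$): because $g^{ij}$ and $g$ are symmetric, the two terms on the right become equal after contraction, so their sum collapses to the factor $-2$.

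The only genuinely delicate point is (i): one must fix conventions so that $(\nabla_X P)(Y,Z)$ really is the second derivative $(\nabla^2_{X,Y}J)Z$ and then apply the Ricci identity with the correct sign. Everything else is symmetry pushing, and the recurring device is that both $J$ and every $(\nabla_X J)$ are $g$-self-adjoint, which is exactly what lets $J$ slide between the third and fourth arguments of $R$ in (i) and between the two factors of $g$ on the right of (ii) and (iv).
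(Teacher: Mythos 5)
Your proof is correct and takes essentially the same route as the paper: (i) via the Ricci identity for the $(1,1)$-tensor $J$ together with $\nabla g=0$, (ii) and (iii) by covariantly differentiating the symmetries \eqref{F-prop} of $F$, and (iv) by contracting (ii) with $X=e_i$, $Y=e_j$ against $g^{ij}$. You merely make explicit the bookkeeping the paper leaves implicit, namely the $g$-self-adjointness of $J$ and of each endomorphism $\left(\nabla_X J\right)$.
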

\begin{proof}
The equality (i) follows from the Ricci identity for $J$
\[
\bigl(\nabla_X\nabla_Y J\bigr)Z-\bigl(\nabla_Y\nabla_X
J\bigr)Z=R(X,Y)JZ - JR(X,Y)Z
\]
and the property of covariant constancy of $g$, i.e.
$\nabla g=0$.

The property \eqref{F-prop} of $F$ and the definition of
the covariant derivative of $F$ imply the equations (ii)
and (iii).

The equation (iv) is a corollary of (ii) by the action of
contraction of $X=e_i$ and $Y=e_j$ for an arbitrary basis
$\{e_i\} (i=1, 2, \dots, 2n)$ of $T_pM$.
\end{proof}

The square norm $\norm{\nabla J}$ of $\nabla J$ is defined
by
\begin{equation}\label{snorm}
    \norm{\nabla J}=g^{ij}g^{kl}
    g\bigl(\left(\nabla_{e_i} J\right)e_k,\left(\nabla_{e_j}
    J\right)e_l\bigr).
\end{equation}

A manifold $(M,J,g)$ belongs to the class $\W_0$ if and only if
$\nabla J=0$. It is clear that if $(M,J,g)\in\W_0$, then
$\norm{\nabla J}$ vanishes, too, but the inverse proposition is
not always true. That is, in general, the vanishing of the square
norm $\norm{\nabla J}$ does not always imply the K\"ahler
condition $\nabla J=0$.

An almost complex manifold with Norden metric satisfying the
condition $\norm{\nabla J}$ to be zero is called an
\emph{isotropic K\"ahler manifold with Norden metric}.

A special subclass of the investigated manifolds consisting of
isotropic K\"ahler but non-K\"ahler manifold with Norden metric is
considered in \cite{Me}. In the next section we will focus on this
case.

%%%%%%%%%%%%%%%%%%%%%%%%%%%%%%%%%%%%%%%%%%%%%%%%%%

%%%%%%%%%%%%%%%%%%%%%%%%%%%%%%%%%%%%%%%%%%%%%%%%%%%%%%%%%%%%%%%%%%%%%%%%%%%%%%%%%
\section{The Quasi-K\"ahler Manifolds with Norden Metric}\label{sec_2}

Let $(M,J,g)$ be a quasi-K\"ahler manifold with Norden metric (in
short a $\W_3$-mani\-fold), i.e. it belongs to the class $\W_3$.
\begin{prop}
The following properties are valid for an arbitrary
$\W_3$-manifold.
\begin{enumerate}
\renewcommand{\labelenumi}{(\roman{enumi})}
    \item
    $\bigl(\nabla_X J\bigr)JY+\bigl(\nabla_Y J\bigr)JX+
    \bigl(\nabla_{JX} J\bigr)Y+\bigl(\nabla_{JY}
    J\bigr)X=0$;
    \item
    $\mathop{\s} \limits_{X,Y,Z}
    F(JX,Y,Z)=0$;
    \item
    $\mathop{\s} \limits_{Y,Z,U}
    \bigl(\nabla_X F\bigr)(Y,Z,U)=0$;
    \item
    $g^{ij}\bigl(\nabla_X F\bigr)(e_i,e_j,z)=g^{ij}\bigl(\nabla_X
    F\bigr)(z,e_i,e_j)=0$,
\end{enumerate}
where $\s$ is the cyclic sum by three arguments.
\end{prop}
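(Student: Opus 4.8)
The whole proposition is driven by the single defining identity of $\W_3$, namely $\mathop{\s}\limits_{X,Y,Z} F(X,Y,Z)=0$, read together with the symmetries \eqref{F-prop}; the one I would lean on most is $F(X,Y,Z)=F(X,JY,JZ)$, which, on replacing $Y$ by $JY$ and using $J^2=-\mathrm{id}$, yields the sign rule $F(X,JY,Z)=-F(X,Y,JZ)$. The plan is to prove the four items in the order (ii), (i), (iii), (iv), since the first two share the same piece of algebra while the last two are differentiation/contraction corollaries.

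For (ii), I would apply the $\W_3$ identity to the triple $(JX,JY,JZ)$ and then collapse each summand with $F(A,JB,JC)=F(A,B,C)$; this carries $F(JX,JY,JZ)+F(JY,JZ,JX)+F(JZ,JX,JY)$ exactly onto $F(JX,Y,Z)+F(JY,Z,X)+F(JZ,X,Y)$, which is the asserted cyclic sum. For (i), which is an identity between vector fields, I would pair it with an arbitrary $Z$ so that every summand becomes a value of $F$ with the differentiation direction in the first slot, giving $F(X,JY,Z)+F(Y,JX,Z)+F(JX,Y,Z)+F(JY,X,Z)$. Applying the sign rule to the first two terms turns them into $-F(X,Y,JZ)-F(Y,X,JZ)$; applying $\W_3$ to $(X,Y,JZ)$ together with the last-two-slot symmetry rewrites this pair as $+F(JZ,X,Y)$, after which the three surviving terms are precisely the cyclic sum (ii) and hence vanish.

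Items (iii) and (iv) I expect to be routine. For (iii), $\mathop{\s}\limits_{Y,Z,U} F$ is the identically zero $(0,3)$-tensor; since covariant differentiation commutes with the cyclic-sum operation and with metric contraction (because $\nabla g=0$), differentiating gives $\mathop{\s}\limits_{Y,Z,U}(\nabla_X F)(Y,Z,U)=0$. For (iv) the second equality is in fact universal: $g^{ij}F(z,e_i,e_j)=g^{ij}g\bigl((\nabla_z J)e_i,e_j\bigr)=\mathrm{tr}(\nabla_z J)=\nabla_z(\mathrm{tr}\,J)=0$, because $J^2=-\mathrm{id}$ forces $\mathrm{tr}\,J=0$; differentiating this identically vanishing $1$-form yields $g^{ij}(\nabla_X F)(z,e_i,e_j)=0$. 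For the first equality I would contract the $\W_3$ identity over its first two slots: two of the three cyclic terms both reduce, via the symmetry of $F$ in its last two arguments and $g^{ij}=g^{ji}$, to the Lie form $\theta(z)$, while the third is the vanishing trace just computed, so $2\theta=0$; then $\theta\equiv 0$, and differentiating gives $g^{ij}(\nabla_X F)(e_i,e_j,z)=0$.

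The only place that needs real care is the sign bookkeeping in (i): choosing the auxiliary triple $(X,Y,JZ)$ for the $\W_3$ relation and tracking the flips produced by $F(X,JY,Z)=-F(X,Y,JZ)$ so that the expression collapses onto (ii) rather than onto $2F(JZ,X,Y)$. The other mild subtlety is the universal fact $\mathrm{tr}(\nabla J)=0$ underlying both halves of (iv).
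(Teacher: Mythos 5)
Your proof is correct and follows essentially the same route as the paper's (very terse) proof: items (i) and (ii) are derived from the defining condition of $\W_3$ combined with the symmetries \eqref{F-prop}, and items (iii) and (iv) by covariant differentiation and contraction using $\nabla g=0$. Your write-up simply supplies the sign bookkeeping and the trace computation $\mathrm{tr}\left(\nabla_z J\right)=0$ that the paper leaves implicit.
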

\begin{proof}
The equalities (i) and (ii) are equivalent to the
characteristic condition \eqref{class} for the class
$\W_3$. The defining equation of the covariant derivative
of $F$, the condition $\nabla g=0$ and the definition
\eqref{class} of $\W_3$ imply the equalities (iii) and
(iv).
\end{proof}

In \cite{Me} it is proved that on every $\W_3$-manifold the
curvature tensor $R$ satisfies the following identity
\begin{equation}\label{12}
\begin{array}{l}
  R(X,JZ,Y,JU)+R(X,JY,U,JZ)+R(X,JY,Z,JU)
  \\[4pt]
  +R(X,JZ,U,JY)+R(X,JU,Y,JZ)+R(X,JU,Z,JY)
  \\[4pt]
  +R(JX,Z,JY,U)+R(JX,Y,JU,Z)+R(JX,Y,JZ,U)
  \\[4pt]
  +R(JX,Z,JU,Y)+R(JX,U,JY,Z)+R(JX,U,JZ,Y)
  \\[4pt]
  =-\mathop{\s}\limits_{X,Y,Z}
    g\Bigl(\bigl(\nabla_X J\bigr)Y+\bigl(\nabla_Y J\bigr)X,
     \bigl(\nabla_Z J\bigr)U+\bigl(\nabla_U J\bigr)Z\Bigr). \\[4pt]
\end{array}
\end{equation}

Let us consider an associated tensor of the Ricci tensor $\rho$
defined by the equation $\rho^*(y,z)=g^{ij}R(e_i,y,z,Je_j)$ on an
almost complex manifold with Norden metric. The tensor $\rho^*$ is
symmetric because of the first Bianchi identity.

By virtue of the identity \eqref{12} we get immediately the
following
\begin{lem}
For a $\W_3$-manifold $(M,J,g)$ with the Ricci tensor $\rho$ of
$\nabla$ and its associated tensor
$\rho^*(y,z)=g^{ij}R(e_i,y,z,Je_j)$ we have
\begin{enumerate}
\renewcommand{\labelenumi}{(\roman{enumi})}
    \item
    $
    \begin{array}{l}
      \rho^*(Jy,z)+\rho^*(y,Jz)+\rho(y,z)-\rho(Jy,Jz)
      \\[4pt]
      =-g^{ij}g\Bigl(\bigl(\nabla_{e_i} J\bigr)y+\bigl(\nabla_y J\bigr)e_i,
     \bigl(\nabla_z J\bigr)e_j+\bigl(\nabla_{e_j} J\bigr)z\Bigr);
     \\[4pt]
    \end{array}
    $
    \item
    $\norm{\nabla J}=-2g^{ij}g^{kl}
    g\Bigl(\bigl(\nabla_{e_i} J\bigr)e_k,\bigl(\nabla_{e_l} J\bigr)e_j\Bigr)$.
\end{enumerate}
\end{lem}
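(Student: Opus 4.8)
The plan is to derive both identities directly from the master identity \eqref{12} by suitable contractions, since the Lemma is explicitly announced as a consequence of \eqref{12}.

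For part (i), I would contract \eqref{12} with $g^{ij}$ by setting $X=e_i$ and $U=Je_j$ (or an equivalent pairing that produces the factor $J$ needed to form $\rho^*$). The left-hand side of \eqref{12} consists of twelve curvature terms; under this contraction each becomes either a component of $\rho$ or of $\rho^*$. The key computational point is to track how each of the twelve terms collapses: terms of the form $R(e_i,\cdot,\cdot,Je_j)$ reproduce $\rho^*$ by its very definition $\rho^*(y,z)=g^{ij}R(e_i,y,z,Je_j)$, while terms where the $J$ sits on one of the contracted slots must be moved using $g(JX,JY)=-g(X,Y)$ from \eqref{Jg} together with the metric property of $R$, converting them into $\rho$ and $\rho(J\cdot,J\cdot)$. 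I expect the four surviving traces to assemble precisely into $\rho^*(Jy,z)+\rho^*(y,Jz)+\rho(y,z)-\rho(Jy,Jz)$. The right-hand side, the cyclic sum $\s_{X,Y,Z} g(\dots)$, collapses under the same contraction into the stated single trace $-g^{ij}g\bigl((\nabla_{e_i}J)y+(\nabla_y J)e_i,(\nabla_z J)e_j+(\nabla_{e_j}J)z\bigr)$; here I would use that two of the three cyclic terms coincide after contraction and that the Ricci identity symmetries let the surviving pieces combine.

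For part (ii), I would perform a full double contraction of part (i): set $z=y=e_k$ and contract with $g^{kl}$ (or equivalently contract (i) a second time over $y,z$). On the left, the curvature terms $\rho^*(Jy,z)+\rho^*(y,Jz)$ and $\rho(y,z)-\rho(Jy,Jz)$ each vanish or cancel upon this symmetric trace: $\rho(y,z)-\rho(Jy,Jz)$ traces to zero because contracting $\rho(Jy,Jz)$ with $g^{kl}$ reintroduces $g(Je_k,Je_l)=-g(e_k,e_l)$ and thus reproduces $-\operatorname{tr}\rho$, matching the first term, while the $\rho^*$ contributions cancel by the symmetry of $\rho^*$ noted just before the Lemma. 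What remains on the right is the double trace of $g\bigl((\nabla_{e_i}J)e_k+(\nabla_{e_k}J)e_i,(\nabla_{e_l}J)e_j+(\nabla_{e_j}J)e_l\bigr)$, and expanding this four-term product and relabeling dummy indices should produce a multiple of $\norm{\nabla J}$ together with the cross term $g^{ij}g^{kl}g\bigl((\nabla_{e_i}J)e_k,(\nabla_{e_l}J)e_j\bigr)$.

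The main obstacle I anticipate is the bookkeeping in part (i): correctly accounting for the twelve curvature terms and verifying that the $J$'s land so that exactly the combination $\rho^*(Jy,z)+\rho^*(y,Jz)+\rho(y,z)-\rho(Jy,Jz)$ emerges, with no spurious leftover terms. This requires careful and repeated use of \eqref{Jg}, the antisymmetry $R(X,Y,Z,U)=-R(Y,X,Z,U)$, the pair symmetry $R(X,Y,Z,U)=R(Z,U,X,Y)$, and the skew-symmetry of $\nabla J$ with respect to $g$, namely $g\bigl((\nabla_X J)Y,Z\bigr)=g\bigl((\nabla_X J)Z,Y\bigr)$ following from \eqref{F-prop}. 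Once (i) is correctly established, (ii) is a comparatively routine symmetric contraction, so I would invest most of the care in the index tracking for (i).
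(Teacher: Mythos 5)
Your plan for part (i) is sound and matches what the paper intends (the paper offers no computation, asserting the Lemma follows ``immediately'' from \eqref{12}): contracting \eqref{12} with $X=e_i$, $U=e_j$ against $g^{ij}$ does yield (i). Two inputs you leave unnamed are actually needed there. First, the terms carrying $J$ on a contracted slot in the first pair, i.e.\ traces of the form $g^{ij}R(e_i,Je_j,\cdot\,,\cdot)$, vanish by the first Bianchi identity (the same fact behind the symmetry of $\rho^*$). Second, of the three cyclic terms on the right, two indeed coincide, but the third pairs $\bigl(\nabla_y J\bigr)z+\bigl(\nabla_z J\bigr)y$ against $g^{ij}\bigl[\bigl(\nabla_{e_i}J\bigr)e_j+\bigl(\nabla_{e_j}J\bigr)e_i\bigr]$, which is twice the metric dual of the Lie form $\theta$; it dies not by any symmetry of $R$ or of $\nabla J$ but because $\theta=0$ on a $\W_3$-manifold (contract $\mathop{\s}F=0$ and use $\mathrm{tr}\,(\nabla_z J)=0$, which follows from $\nabla_z J$ anticommuting with $J$). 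With these, the twelve curvature traces give twice the stated combination and the surviving cyclic terms give $-2$ times the stated right-hand side, so (i) holds.

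Part (ii) is where the proposal genuinely breaks. Tracing (i) over $y,z$ does \emph{not} annihilate the curvature side: since $g^{kl}\rho(Je_k,Je_l)=-\tau$, the combination $\rho(y,z)-\rho(Jy,Jz)$ traces to $\tau-(-\tau)=2\tau$, not zero (you dropped the minus sign already present in (i)); and the symmetry of $\rho^*$ makes $g^{kl}\rho^*(Je_k,e_l)$ and $g^{kl}\rho^*(e_k,Je_l)$ \emph{equal with the same sign}, so they add to $2\tau^{**}$ rather than cancel. Writing $T=g^{ij}g^{kl}g\bigl(\bigl(\nabla_{e_i}J\bigr)e_k,\bigl(\nabla_{e_l}J\bigr)e_j\bigr)$, the honest trace of (i) reads
\[
2\bigl(\tau+\tau^{**}\bigr)=-2T-2\norm{\nabla J},
\]
a genuine curvature relation; it is precisely this identity which, \emph{combined with} (ii), produces the subsequent Theorem $\norm{\nabla J}=-2(\tau+\tau^{**})$, so extracting (ii) from the trace of (i) is impossible (and circular). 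A sanity check: were your cancellations correct, every $\W_3$-manifold would be isotropic K\"ahlerian, trivializing the paper's later results, and even then the four cross terms expand to $2\norm{\nabla J}+2T$, giving $\norm{\nabla J}=-T$, not the stated $-2T$. The identity (ii) in fact needs no curvature at all: it is the quadratic contraction of the defining $\W_3$-condition. Set $F_{ikm}=F(e_i,e_k,e_m)$, contract $F_{ikm}+F_{kmi}+F_{mik}=0$ with $g^{ij}g^{kl}g^{mn}F_{jln}$; using $F(x,y,z)=F(x,z,y)$, the first product is $\norm{\nabla J}$ and each of the other two equals $T$, whence
\[
\norm{\nabla J}+2T=0,
\]
which is exactly (ii).
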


The last lemma and the identity \eqref{12} imply the next
\begin{thm}
Let $(M,J,g)$ be a $\W_3$-manifold. Then
\[
\norm{\nabla J}=-2(\tau+\tau^{**}),
\]
where $\tau$ is the scalar curvature of $\nabla$ and
$\tau^{**}=g^{ij}g^{kl}R(e_i,e_k,Je_l,Je_j)$.
\end{thm}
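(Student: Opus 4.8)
The plan is to obtain the Theorem by one further contraction of identity (i) of the preceding Lemma. Concretely, I would set $y=e_k$, $z=e_l$ and contract with $g^{kl}$: this should turn the left-hand side into a combination of the scalar invariants $\tau$ and $\tau^{**}$, while the right-hand side should collapse to a multiple of $\norm{\nabla J}$.

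On the left-hand side there are four contracted terms. The term $g^{kl}\rho(e_k,e_l)$ is by definition the scalar curvature $\tau$, and $g^{kl}\rho^*(e_k,Je_l)=g^{kl}g^{ij}R(e_i,e_k,Je_l,Je_j)$ is exactly $\tau^{**}$. For $g^{kl}\rho^*(Je_k,e_l)$ I would use that $\rho^*$ is symmetric (as noted just before the Lemma) together with the relabelling $k\leftrightarrow l$ of the dummy indices to see that it again equals $\tau^{**}$. The only term requiring a genuine computation is $-g^{kl}\rho(Je_k,Je_l)$; here I would first prove the purely algebraic fact that $g^{kl}P(Je_k,Je_l)=-g^{kl}P(e_k,e_l)$ for every $(0,2)$-tensor $P$, which follows at once from $J^2=-\mathrm{id}$ and the Norden condition $g(JX,JY)=-g(X,Y)$ in \eqref{Jg} (in matrix form $J^{\top}GJ=-G$ with $J^{-1}=-J$ gives $JG^{-1}J^{\top}=-G^{-1}$). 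Applying this with $P=\rho$ yields $g^{kl}\rho(Je_k,Je_l)=-\tau$, so the whole left-hand side becomes $2\tau^{**}+2\tau=2(\tau+\tau^{**})$.

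For the right-hand side I would expand the symmetric pairing into its four scalar products. After the substitution $y=e_k$, $z=e_l$, two of them are parallel contractions of the form $g^{ij}g^{kl}g\bigl((\nabla_{e_i}J)e_k,(\nabla_{e_j}J)e_l\bigr)$, which equal $\norm{\nabla J}$ by the definition \eqref{snorm}, and the other two are crossed contractions of the form $g^{ij}g^{kl}g\bigl((\nabla_{e_i}J)e_k,(\nabla_{e_l}J)e_j\bigr)$, which equal $-\tfrac12\norm{\nabla J}$ by part (ii) of the Lemma; the needed relabellings only rename dummy indices and use the symmetry of $g^{ij}$. Carrying along the overall minus sign, the four contributions $\tfrac12,-1,-1,\tfrac12$ (in units of $\norm{\nabla J}$) sum to $-\norm{\nabla J}$. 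Equating the two sides gives $2(\tau+\tau^{**})=-\norm{\nabla J}$, which is the assertion.

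The step I expect to be the main obstacle is the right-hand side bookkeeping: one must correctly separate the four scalar products into parallel and crossed contractions and perform the index relabellings so as to match exactly the two normalisations \eqref{snorm} and part (ii) of the Lemma, since a single misidentified contraction changes the final coefficient. The left-hand side is comparatively routine once the sign-flip identity $g^{kl}\rho(Je_k,Je_l)=-\tau$ and the symmetry of $\rho^*$ are in hand.
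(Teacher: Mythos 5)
Your proposal is correct and is essentially the paper's own argument: the paper states the theorem as an immediate consequence of the preceding Lemma and identity \eqref{12}, and your computation—contracting part (i) of the Lemma with $g^{kl}$, using the symmetry of $\rho^*$ and the Norden sign-flip $g^{kl}\rho(Je_k,Je_l)=-\tau$ on the left, and splitting the right side into two parallel contractions equal to $\norm{\nabla J}$ by \eqref{snorm} plus two crossed contractions equal to $-\tfrac12\norm{\nabla J}$ by part (ii)—is exactly the intended derivation, with all coefficients checking out to give $2(\tau+\tau^{**})=-\norm{\nabla J}$.
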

Hence, the last theorem implies the following
\begin{cor}
If $(M,J,g)$ is an isotropic K\"ahler $\W_3$-manifold then
$\tau^{**}=-\tau$.
\end{cor}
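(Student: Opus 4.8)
The plan is to read the corollary off directly from the preceding theorem by inserting the defining property of the isotropic K\"ahler class. First I would recall from Section~\ref{sec_1} that an almost complex manifold with Norden metric is called \emph{isotropic K\"ahler} precisely when the square norm $\norm{\nabla J}$ vanishes; this is the defining condition, not something one has to derive. Consequently, for an isotropic K\"ahler $\W_3$-manifold $(M,J,g)$ two facts are available at once: the manifold lies in $\W_3$, so the last theorem applies to it verbatim, and moreover $\norm{\nabla J}=0$.

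Next I would simply substitute $\norm{\nabla J}=0$ into the identity $\norm{\nabla J}=-2(\tau+\tau^{**})$ furnished by the last theorem. This gives $-2(\tau+\tau^{**})=0$, hence $\tau+\tau^{**}=0$, and transposing yields $\tau^{**}=-\tau$, which is exactly the asserted equality.

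There is essentially no obstacle at this stage: all of the analytic content---the curvature identity \eqref{12}, the contraction performed in the lemma, and the trace identity relating $\norm{\nabla J}$ to $\tau$ and $\tau^{**}$---has already been packaged into the statement of the theorem, so only a one-line substitution remains. The one point I would flag is conceptual rather than computational, namely that the isotropic K\"ahler hypothesis supplies exactly the vanishing of $\norm{\nabla J}$ and nothing stronger; in particular it does not force the full K\"ahler condition $\nabla J=0$ (equivalently membership in $\W_0$), as already emphasised in Section~\ref{sec_1}. Once this is observed, one sees that the corollary genuinely uses only the weaker isotropic condition, and the conclusion $\tau^{**}=-\tau$ follows immediately.
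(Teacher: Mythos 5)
Your proposal is correct and matches the paper exactly: the paper states the corollary as an immediate consequence of the preceding theorem, which is precisely your substitution of the defining condition $\norm{\nabla J}=0$ into $\norm{\nabla J}=-2(\tau+\tau^{**})$. Your remark that the isotropic K\"ahler hypothesis is strictly weaker than $\nabla J=0$ is a correct reading of Section~\ref{sec_1} and consistent with the paper's emphasis.
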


According to \cite{Me}, if $(M,J,g)$, $\dim{M}\geq 4$, is a
$\W_3$-manifold which has the K\"ahler property of $R$:
$R(X,Y,JZ,JU)=-R(X,Y,Z,U)$, then the norm $\norm{\bigl(\nabla_x
J\bigr)x}$ vanishes for every vector $x\in T_pM$.

Since $\norm{\bigl(\nabla_x J\bigr)x}= g\Bigl(\bigl(\nabla_{x}
J\bigr)x,\bigl(\nabla_{x} J\bigr)x\Bigr)=0$ holds, then applying
the substitutions $x\rightarrow x+y$, primary, and $x\rightarrow
x+z$, $y\rightarrow y+u$, secondary, we receive the following
condition
\[
g\Bigl(\bigl(\nabla_{x} J\bigr)z,\bigl(\nabla_{y} J\bigr)u\Bigr)
+g\Bigl(\bigl(\nabla_{x} J\bigr)u,\bigl(\nabla_{y} J\bigr)z\Bigr)
=0.
\]
Therefore, for the traces we have
\[
g^{ij}g^{kl}g\Bigl(\bigl(\nabla_{e_i}
J\bigr)e_k,\bigl(\nabla_{e_j} J\bigr)e_l\Bigr)=0.
\]
Having in mind \eqref{snorm} we obtain the following
\begin{prop}
If $(M,J,g)$, $\dim{M}\geq 4$, is a $\W_3$-manifold with K\"ahler
curvature tensor, then it is isotropic K\"ahlerian.
\end{prop}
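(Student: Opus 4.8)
The plan is to start from the scalar identity guaranteed by \cite{Me} and to upgrade it, by polarization and a single contraction, to the vanishing of the full square norm $\norm{\nabla J}$, which is exactly the defining condition of an isotropic K\"ahler manifold with Norden metric.

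First I would record the input. Under the two standing hypotheses $\dim M\geq 4$ and the K\"ahler property $R(X,Y,JZ,JU)=-R(X,Y,Z,U)$, the cited result of \cite{Me} gives
\[
\norm{\bigl(\nabla_x J\bigr)x}=g\Bigl(\bigl(\nabla_x J\bigr)x,\bigl(\nabla_x J\bigr)x\Bigr)=0
\]
for every $x\in T_pM$ and every $p\in M$. The left-hand side is a homogeneous quartic in $x$, so its pointwise vanishing is equivalent to the vanishing of the associated symmetric $4$-linear form; producing that form and then contracting it is all that remains.

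Next I would polarize, in two stages. With $\bigl(\nabla_x J\bigr)y$ denoting the bilinear quantity in play, the substitution $x\to x+y$ together with cancellation of the pure terms $\norm{(\nabla_x J)x}=\norm{(\nabla_y J)y}=0$ leaves a mixed relation in $x,y$; a second polarization, splitting the two repeated slots by $x\to x+z$ and $y\to y+u$, isolates the fully mixed part and should yield
\[
g\Bigl(\bigl(\nabla_x J\bigr)z,\bigl(\nabla_y J\bigr)u\Bigr)+g\Bigl(\bigl(\nabla_x J\bigr)u,\bigl(\nabla_y J\bigr)z\Bigr)=0
\]
for all $x,y,z,u\in T_pM$. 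Here the symmetries \eqref{F-prop} of $F$ (equivalently, the $g$-self-adjointness of $\nabla_x J$) are what guarantee that the intermediate pure and half-mixed terms cancel and that exactly the two displayed summands survive.

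Finally I would contract. Setting $x=e_i$, $y=e_j$, $z=e_k$, $u=e_l$ in an arbitrary basis $\{e_i\}$ and applying the double trace $g^{ij}g^{kl}$, the symmetry $g^{kl}=g^{lk}$ identifies the two summands after relabeling $k\leftrightarrow l$, so the identity reduces to
\[
g^{ij}g^{kl}g\Bigl(\bigl(\nabla_{e_i} J\bigr)e_k,\bigl(\nabla_{e_j} J\bigr)e_l\Bigr)=0,
\]
which by \eqref{snorm} is precisely $\norm{\nabla J}=0$. The manifold is therefore isotropic K\"ahlerian by definition. I expect the only genuinely delicate step to be the polarization bookkeeping---verifying that the two stages produce exactly the bilinear-pairing identity above and nothing more---since the substantive geometric input, the vanishing of $\norm{\bigl(\nabla_x J\bigr)x}$, is already furnished by \cite{Me}.
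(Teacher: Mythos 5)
Your proposal is correct and follows essentially the same route as the paper: the paper likewise invokes the result of \cite{Me} that $\norm{\bigl(\nabla_x J\bigr)x}=0$ under the K\"ahler curvature hypothesis, performs exactly the two-stage polarization $x\to x+y$ followed by $x\to x+z$, $y\to y+u$ to obtain the mixed identity $g\bigl(\bigl(\nabla_x J\bigr)z,\bigl(\nabla_y J\bigr)u\bigr)+g\bigl(\bigl(\nabla_x J\bigr)u,\bigl(\nabla_y J\bigr)z\bigr)=0$, and then contracts to conclude $\norm{\nabla J}=0$ via \eqref{snorm}. Your added remark on the symmetries \eqref{F-prop} and the $\W_3$ condition ensuring the polarization bookkeeping closes is a fair gloss on details the paper leaves implicit.
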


Let $\alpha_1$ and $\alpha_2$ be holomorphic 2-planes determined
by the basis $(x,Jx)$ and $(y,Jy)$, respectively. The holomorphic
bisectional curvature $h(x,y)$ of the pair of holomorphic 2-planes
$\alpha_1$ and $\alpha_2$ is introduced in \cite{GrDjMe} by the
following way
\begin{equation}\label{h}
    h(x,y)=-\frac{R(x,Jx,y,Jy)}
    {\sqrt{\left\{g(x,x)\right\}^2+\left\{g(x,Jx)\right\}^2}
    \sqrt{\left\{g(y,y)\right\}^2+\left\{g(y,Jy)\right\}^2}},
\end{equation}
where $x$, $y$ do not lie along the totally isotropic directions,
i.e. the quadruple of real numbers $ \bigl(g(x,x), g(x,Jx),
g(y,y), g(y,Jy)\bigr)$ does not coincide with the null quadruple
$\left(0,0,0,0\right)$. The holomorphic bisectional curvature is
invariant with respect to the basis of the 2-planes $\alpha_1$ and
$\alpha_2$. In particular, if $\alpha_1=\alpha_2$, then the
holomorphic bisectional curvature coincides with the holomorphic
sectional curvature of the 2-plane $\alpha_1=\alpha_2$.

For arbitrary vectors $x,y \in T_pM$ we have the following
equation
\[
    R(x,Jx,y,Jy)=-h(x,y)
    \sqrt{\left\{g(x,x)\right\}^2+\left\{g(x,Jx)\right\}^2}
    \sqrt{\left\{g(y,y)\right\}^2+\left\{g(y,Jy)\right\}^2}.
\]
Let us note that $R(x,Jx,y,Jy)$ is the main component of the
curvature tensor on the 4-dimensional holomorphic space spanned by
the frame $\{x,y,Jx,Jy\}$, which is contained in $T_pM$.
Therefore, an important problem is the vanishing of
$R(x,Jx,y,Jy)$.

\begin{thm}\label{t21}
Let $(M,J,g)$, $\dim{M}\geq 4$, be a $\W_3$-manifold and $x,y \in
T_pM$. On a 4-dimensional holomorphic space spanned by
$\{x,y,Jx,Jy\}$ in $T_pM$ the condition $R(x,Jx,y,Jy)=0$ is
equivalent to the truthfulness of at least one of the following
conditions
\begin{enumerate}
\renewcommand{\labelenumi}{(\roman{enumi})}
    \item
    the bisectional curvature of holomorphic 2-planes $\{x,Jx\}$ and $\{y,Jy\}$ vanishes, i.e.
    $h(x,y)=0$;
    \item
    the holomorphic 2-plane $\{x,Jx\}$ is strongly isotropic, \\
    i.e.
    $g(x,x)=g(x,Jx)=0$;
    \item
    the holomorphic 2-plane $\{y,Jy\}$ is strongly isotropic, \\
    i.e.
    $g(y,y)=g(y,Jy)=0$.
\end{enumerate}
\end{thm}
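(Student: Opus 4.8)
The plan is to read the equivalence straight off the product formula
\[
R(x,Jx,y,Jy)=-h(x,y)\sqrt{\{g(x,x)\}^2+\{g(x,Jx)\}^2}\,\sqrt{\{g(y,y)\}^2+\{g(y,Jy)\}^2}
\]
established immediately before the statement. Abbreviating the two radicals by $A=\sqrt{\{g(x,x)\}^2+\{g(x,Jx)\}^2}$ and $B=\sqrt{\{g(y,y)\}^2+\{g(y,Jy)\}^2}$, the right-hand side is the product of the three real numbers $-h(x,y)$, $A$ and $B$, and a product of real numbers vanishes if and only if at least one factor vanishes. Thus the whole argument reduces to matching each vanishing factor to one of the three listed conditions.

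First I would record the elementary observation that, since $A$ and $B$ are square roots of sums of two squares of reals, one has $A=0$ iff $g(x,x)=g(x,Jx)=0$ and $B=0$ iff $g(y,y)=g(y,Jy)=0$; these are exactly conditions (ii) and (iii), expressing that the holomorphic $2$-plane $\{x,Jx\}$, respectively $\{y,Jy\}$, is strongly isotropic. The remaining factor $-h(x,y)$ vanishes precisely when (i) holds. With these identifications both implications are immediate: if any of (i)--(iii) holds then one of the factors $-h(x,y),A,B$ is zero and hence $R(x,Jx,y,Jy)=0$; conversely $R(x,Jx,y,Jy)=0$ forces the product $h(x,y)\,A\,B$ to vanish, so at least one of $h(x,y),A,B$ is zero, which is exactly ``(i) or (ii) or (iii)''.

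The only point requiring care -- and the step I expect to be the genuine obstacle rather than the routine algebra -- is the domain on which $h(x,y)$ is defined. The holomorphic bisectional curvature exists only when the quadruple $\bigl(g(x,x),g(x,Jx),g(y,y),g(y,Jy)\bigr)$ is not the null quadruple, i.e. when $A$ and $B$ are not simultaneously zero; this nondegeneracy is precisely what makes condition (i) meaningful. I would therefore state and prove the equivalence under this standing hypothesis: when the quadruple is non-null the displayed formula is valid with a finite value of $h(x,y)$, and the three-factor argument applies verbatim. Note that under this hypothesis (ii) and (iii) cannot hold together (that would give $A=B=0$, the excluded case), so there is no inconsistency: if (ii) holds then (iii) fails, $B\neq 0$, $h(x,y)$ is finite, and $R(x,Jx,y,Jy)=-h(x,y)\cdot 0\cdot B=0$, and symmetrically for (iii). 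I would make explicit that it is exactly this nondegeneracy, built into the definition of $h$, that keeps the three conditions compatible with the factorization and that the fully isotropic case $A=B=0$ lies outside the scope of the statement by the very definition of $h(x,y)$.
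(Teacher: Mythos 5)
Your proposal follows exactly the route the paper itself takes: Theorem~\ref{t21} is given no separate proof there, being presented as an immediate consequence of the displayed identity $R(x,Jx,y,Jy)=-h(x,y)AB$ (writing $A$, $B$ for the two radicals, as you do), and your three-factor argument --- a product of reals vanishes iff some factor does; $A=0$ iff (ii); $B=0$ iff (iii); $-h(x,y)=0$ iff (i) --- is precisely that implicit proof.

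One correction, however, to the very point you single out as the delicate one: non-nullity of the quadruple $\bigl(g(x,x),g(x,Jx),g(y,y),g(y,Jy)\bigr)$ does \emph{not} make $h(x,y)$ finite. The denominator in \eqref{h} is the product $AB$, so $h$ is undefined as soon as \emph{either} of $A$, $B$ vanishes; in particular, if (ii) holds and (iii) fails, the quadruple is non-null yet $AB=0$. Hence your verification of the implication (ii) $\Rightarrow R(x,Jx,y,Jy)=0$ via ``$h(x,y)$ is finite and $R=-h(x,y)\cdot 0\cdot B=0$'' manipulates an undefined quantity. In these degenerate cases the implication can only be read off from the paper's blanket assertion that the displayed equation holds ``for arbitrary vectors'' --- an assertion which itself presupposes that $R(x,Jx,y,Jy)=0$ whenever one of the holomorphic $2$-planes is strongly isotropic, a genuinely nontrivial curvature claim that is not a formal consequence of the definition of $h$ and that the paper does not prove either. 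So your factorization correctly settles the generic case $AB\neq 0$ and the direction ``$R=0$ implies (i) or (ii) or (iii)''; the residual direction from (ii) or (iii) back to $R(x,Jx,y,Jy)=0$ is a blind spot shared with the paper, and your specific justification of it (finiteness of $h$ under mere non-nullity of the quadruple) is incorrect as written.
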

\begin{thm}\label{t22}
Let $(M,J,g)$, $\dim{M}\geq 4$, be a $\W_3$-manifold and
$R(x,Jx,y,Jy)=0$ for all $x,y \in T_pM$. Then $(M,J,g)$ is an
isotropic K\"ahler $\W_3$-manifold.
\end{thm}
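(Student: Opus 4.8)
The plan is to reduce the statement to a single numerical identity between the two scalar invariants and then extract that identity from the hypothesis by polarization and contraction. Recall that $(M,J,g)$ is isotropic K\"ahler precisely when $\norm{\nabla J}=0$, and that on a $\W_3$-manifold the theorem preceding the corollary gives $\norm{\nabla J}=-2(\tau+\tau^{**})$ with $\tau^{**}=g^{ij}g^{kl}R(e_i,e_k,Je_l,Je_j)$. Hence it suffices to prove $\tau^{**}=-\tau$ under the assumption $R(x,Jx,y,Jy)=0$ for all $x,y\in T_pM$; the scalar theorem then closes the argument. This is the converse reading of the corollary, where that same scalar theorem is used in the opposite direction.

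First I would polarize the hypothesis exactly as in the proof of the previous proposition. The map $x,y\mapsto R(x,Jx,y,Jy)$ is quadratic in $x$ and in $y$, so substituting $x\rightarrow x+z$ and discarding the summands that vanish by hypothesis gives $R(x,Jz,y,Jy)+R(z,Jx,y,Jy)=0$; polarizing this in $y$ via $y\rightarrow y+u$ then yields, for all $x,y,z,u\in T_pM$,
\[
R(x,Jz,y,Ju)+R(z,Jx,y,Ju)+R(x,Jz,u,Jy)+R(z,Jx,u,Jy)=0.
\]
This is the full content of the assumption; note that each surviving summand carries exactly one factor $J$ in the first pair of arguments and one in the second pair.

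Next I would contract this identity twice: putting $x=e_i$, $z=e_l$, $y=e_k$, $u=e_j$ and applying $g^{ij}g^{kl}$. I claim the four resulting traces of $R(e_\bullet,Je_\bullet,e_\bullet,Je_\bullet)$ split into two equal to $\tau^{**}$ and two equal to $\tau$, so that $2(\tau+\tau^{**})=0$. For the two summands in which the two $J$-indices are contracted against \emph{each other}, I would use the Norden relation $g^{ij}\bigl(Je_i\bigr)^{p}\bigl(Je_j\bigr)^{q}=-g^{pq}$ (equivalently $Jg^{-1}J^{\top}=-g^{-1}$, which follows from $g(JX,JY)=-g(X,Y)$); this strips both $J$'s at the cost of a sign and, after the antisymmetry of $R$ is applied, identifies each such trace with the scalar curvature $\tau$. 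For the remaining two summands, in which each $J$-index is contracted against a bare slot lying in the \emph{opposite} skew pair, I would apply the first Bianchi identity to $R(e_i,e_k,Je_l,Je_j)$ and discard the term in which a $J$ meets a bare index inside the \emph{same} skew pair: that trace vanishes because $g^{ij}\bigl(Je_j\bigr)^{p}$ is symmetric in $i,p$ while the corresponding block of $R$ is skew there. What remains matches each of these two traces with $\tau^{**}$. Combining gives $\tau+\tau^{**}=0$, whence $\norm{\nabla J}=-2(\tau+\tau^{**})=0$, so $(M,J,g)$ is isotropic K\"ahlerian.

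The main obstacle is precisely the bookkeeping in this double contraction: four superficially distinct traces of $R(e_\bullet,Je_\bullet,e_\bullet,Je_\bullet)$ must be sorted according to whether the two $J$'s sit in the same or in opposite skew pairs, and only the Norden identity $Jg^{-1}J^{\top}=-g^{-1}$ together with the first Bianchi identity lets one collapse them onto the two intrinsic invariants $\tau$ and $\tau^{**}$. Everything else — the two-step polarization and the final appeal to the scalar theorem — is routine.
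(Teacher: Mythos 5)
Your proposal is correct, and I checked the delicate step: with $x=e_i$, $z=e_l$, $y=e_k$, $u=e_j$ and the double contraction $g^{ij}g^{kl}$, the terms $R(e_l,Je_i,e_k,Je_j)$ and $R(e_i,Je_l,e_j,Je_k)$ each reduce to $\tau$ via $g^{ij}(Je_i)^p(Je_j)^q=-g^{pq}$ (which holds because $J$ is $g$-self-adjoint, so $Jg^{-1}J^{\top}=J^2g^{-1}=-g^{-1}$), while $R(e_i,Je_l,e_k,Je_j)$ and $R(e_l,Je_i,e_j,Je_k)$ each reduce to $\tau^{**}$ by the first Bianchi identity once the trace pairing a symmetric factor $g^{kl}(Je_l)^p$ against a skew pair of $R$ is discarded; the sum is indeed $2(\tau+\tau^{**})=0$. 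However, your route is genuinely different from the paper's. The paper polarizes exactly as you do, but then stays at the tensor level: using the first Bianchi identity and the $\W_3$-identity \eqref{12} it shows that the entire left-hand side of \eqref{12} vanishes, hence the quartic identity $\s_{x,y,z}\,g\bigl((\nabla_xJ)y+(\nabla_yJ)x,(\nabla_zJ)u+(\nabla_uJ)z\bigr)=0$ holds for \emph{all} vectors $x,y,z,u$, and only then contracts to get $\norm{\nabla J}=0$. You instead contract the polarized curvature identity immediately to the scalar level, obtain $\tau+\tau^{**}=0$, and close by citing the earlier theorem $\norm{\nabla J}=-2(\tau+\tau^{**})$ — in effect proving the converse of the paper's corollary. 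Two remarks on the trade-off: your argument does not eliminate identity \eqref{12}, since it is the engine behind the cited scalar theorem, so the logical dependence is merely repackaged; on the other hand, the paper's proof yields the stronger pointwise conclusion that the full quadratic $\nabla J$ cyclic sum vanishes identically, whereas you extract only its total trace — which is all the theorem needs, and your version has the virtue of making the corollary an equivalence under the hypothesis and of replacing the paper's terse phrase about ``two similar equations'' with an explicit, verifiable computation.
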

\begin{proof}
Let the condition $R(x,Jx,y,Jy)=0$ be valid. At first we
substitute $x+z$ and $y+u$ for $x$ and $y$, respectively.
According to the first Bianchi identity and \eqref{12}, we get
\begin{equation}
R(x,Jy,z,Ju)+R(Jx,y,Jz,u)-R(x,Jy,Jz,u)-R(Jx,y,z,Ju)=0
\end{equation}
and two similar equations which imply the vanishing of the left
side of \eqref{12}. Therefore
\[
    \mathop{\s}\limits_{x,y,z}
    g\Bigl(\bigl(\nabla_x J\bigr)y+\bigl(\nabla_y J\bigr)x,
    \bigl(\nabla_z J\bigr)u+\bigl(\nabla_u J\bigr)z\Bigr)=0.
\]
By contracting the last equation, having in mind \eqref{snorm}, we
receive the condition $\norm{\nabla J}=0$.
\end{proof}
%%%%%%%%%%%%%%%%%%%%%%%%%%%%%%%%%%%%%%%%%%%%%%%%%%%%%%%%%%%%%%%%%%%%%%%%%%%

\vskip2em \noindent
University of Plovdiv\\
Faculty of Mathematics and Informatics\\
236 Bulgaria blvd.\\
Plovdiv 4003, Bulgaria \\
mircho@pu.acad.bg, mmanev@yahoo.com


\begin{thebibliography}{99}

\bibitem{GrMeDj}
K.~I.~Gribachev, D.~G.~Mekerov, G.~D.~Djelepov. \emph{Generalized
B-Manifolds}, Compt. Rend. Acad. Bulg. Sci., vol.~38, no.~3, 1985,
299--302.

\bibitem{GaBo}
G.~Ganchev, A.~Borisov, \emph{Note on the Almost Complex Manifolds
with a Norden Metric}, Compt. Rend. Acad. Bulg. Sci., vol.~39,
no.~5, 1986, 31--34.

\bibitem{GaGrMi}
G.~Ganchev, K.~Gribachev, V.~Mihova. \emph{B-Connections and their
Conformal Invariants on Conformally Kahler Manifolds with
B-Metric}, Publ. Inst. Math. (Beograd) (N.S.), vol. 42(56), 1987,
107--121.

\bibitem{GrDjMe}
K.~I.~Gribachev, G.~D.~Djelepov, D.~G.~Mekerov. \emph{On Some
Subclasses of Generalized B-Manifolds}, Compt. Rend. Acad. Bulg.
Sci., vol.~38, no.~4, 1985, 437--440.

\bibitem{Me}
D.~G.~Mekerov. \emph{On Some Classes of Almost B-Manifolds},
Compt. Rend. Acad. Bulg. Sci., vol.~38, no.~5, 1985, 559--561.

\bibitem{GRMa}
E.~Garc\'ia-R\'io, Y.~Matsushita, \emph{Isotropic K\"ahler
Structures on Engel 4-Mani\-folds}, J. Geom. Phys., vol.~33, 2000,
288--294.

\end{thebibliography}
\end{document}